\newtheorem{theorem}{Theorem}[subsection]
\newtheorem{cor}[theorem]{Corollary}
\newtheorem{prop}[theorem]{Proposition}
\theoremstyle{definition}
\newtheorem{sit}[theorem]{Situation}
\newtheorem{rem}[theorem]{Remark}
\numberwithin{theorem}{section}
\newcommand{\mb}{\mathbb}
\newcommand{\mc}{\mathcal}
\newcommand{\s}{\subset}
\begin{document}
\title[Peak functions and boundary behaviour...]{Peak functions and boundary behaviour of holomorphically invariant distances on strictly pseudoconvex domains}
\author{Arkadiusz Lewandowski}
\address{Institute of Mathematics\\ Faculty of Mathematics and Computer Science\\ Jagiellonian University\\ {\L}ojasiewicza 6,
30-348 Kraków, Poland}
\email{Arkadiusz.Lewandowski@im.uj.edu.pl}
\begin{abstract}
We give a parameter version of Graham-Kerzman approximation theorem for bounded holomorphic functions on strictly pseudoconvex domains. As an application, we present some uniform estimates for the boundary behaviour of the Kobayashi and Carath\'eodory pseudodistences on such domains. 
\end{abstract}

\subjclass[2010]{Primary 32T40; Secondary 32T15, 32F45}

\keywords{strictly pseudoconvex domains, peak functions, Kobayashi pseudodistance, Carath\'eodory pseudodistance}

\maketitle
\section{Introduction} For a bounded domain $G\s\mb{C}^n$, its boundary point $\zeta$ is called a \emph{peak point} with respect to $\mc{O}(\overline{G})$, the family of functions which are holomorphic in a neighborhood of $\overline{G},$ if there exist a function $f\in\mc{O}(\overline{G})$ such that $f(\zeta)=1$ and $f(\overline{G}\setminus\{\zeta\})\s\mb{D}:=\{z\in\mb{C}:|z|<1\}.$ Such a function is a \emph{peak function for $G$ at $\zeta$}. The peak functions turned out to be an important and fruitful concept in complex analysis, which has been used for instance to show the existence of (complete) proper holomorphic embeddings of strictly pseudoconvex domains into the unit ball $\mb{B}^N$ with large $N$ (see [\ref{For}],[\ref{DD}]), to estimate the boundary behavior of Carath\'{e}odory and Kobayashi metrics ([\ref{Bed}],[\ref{Gra}]), or to construct the solution operators for $\overline{\partial}$ problem with $L^{\infty}$ or H\"{o}lder estimates ([\ref{Forn}],[\ref{Ran2}]).\\
\indent It is well known that if $G$ is strictly pseudoconvex, then its every boundary point allows a peak function. It was Graham, who showed in [\ref{Gra}]  that in this situation there exists an open neighborhood $\widehat{G}$ of $G$, and a continuous function $h:\widehat{G}\times\partial G\rightarrow\mb{C}$ such that for $\zeta\in\partial G$, the function $h(\cdot;\zeta)$ is a peak function for $G$ at $\zeta$.\\
\indent Let us consider the following
\begin{sit}
Let $(G_t)_{t\in T}$ be a family of bounded strictly pseudoconvex domains with $\mc{C}^2$-smooth boundaries, where $T$ is a compact metric space with associated metric $d$. Suppose we have a domain $U\s\s\mb{C}^n$ such that
\begin{enumerate}[(i)]
\item $\displaystyle{\bigcup_{t\in T}\partial G_t\s\s U},$
\item for each $t\in T$ there exists a defining function $r_t\in\mc{C}^2(U)$ for $G_t$ such that its Levi form $\mc{L}_{r_t}$ is positive on $U\times(\mb{C}^n\setminus\{0\}),$
\item for any $\varepsilon>0$ there exists a $\delta>0$ such that for any $s,t\in T$ with $d(s,t)\leq \delta$ there is $\|r_t-r_s\|_{\mc{C}^2(U)}<\varepsilon$.\label{C2}
\end{enumerate}\label{Situation}
\end{sit}
\indent Recently we have proved the following parameter version of Graham's result (cf. [\ref{L}]):
\begin{theorem}
Let $(G_t)_{t\in T}$ be a family of strictly pseudoconvex domains as in Situation \ref{Situation}.
Then there exists an $\varepsilon>0$ such that for any $\eta_1<\varepsilon$ there exist an $\eta_2>0$ and positive constants $d_1,d_2$ such that for any $t\in T$ there exist a domain $\widehat{G_t}$ containing $\overline{G_t}$, and functions $h_t(\cdot;\zeta)\in\mc{O}(\widehat{G_t}),\zeta\in\partial G_t$ fulfilling the following conditions:
\begin{enumerate}
{\item[\emph{(a)}] $h_t(\zeta;\zeta)=1, |h_t(\cdot;\zeta)|<1$ on $\overline{G_t}\setminus\{\zeta\}$ (in particular, $h_t(\cdot;\zeta)$ is a peak function for $G_t$ at $\zeta$), \label{a}}
{\item[\emph{(b)}] $|1-h_t(z;\zeta)|\leq d_1\|z-\zeta\|, z\in\widehat{G_t}\cap\mb{B}(\zeta,\eta_2),$\label{b}}
{\item[\emph{(c)}] $|h_t(z;\zeta)|\leq d_2<1, z\in\overline{G_t},\|z-\zeta\|\geq\eta_1.$\label{c}}
\end{enumerate}
\label{Main}
\end{theorem}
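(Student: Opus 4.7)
The plan is to adapt Graham's construction of peak functions on strictly pseudoconvex domains, upgrading every estimate to be uniform in $t \in T$. The uniformity is enabled by condition (iii) together with the compactness of $T$: the defining functions $r_t$ and their first two derivatives vary continuously in the sup-norm on $\overline{U}$, and the Levi forms $\mathcal{L}_{r_t}$ admit a uniform positive lower bound $\mathcal{L}_{r_t}(p; v) \geq 2c_0\|v\|^2$ for $(p,v) \in \overline{U}\times\mathbb{C}^n$ with $c_0>0$ independent of $t$.

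First I would introduce the parametric Levi polynomial
\[
F_t(z;\zeta) = -\sum_{j=1}^n \frac{\partial r_t}{\partial z_j}(\zeta)(z_j-\zeta_j) - \tfrac{1}{2}\sum_{j,k=1}^n \frac{\partial^2 r_t}{\partial z_j \partial z_k}(\zeta)(z_j-\zeta_j)(z_k-\zeta_k),
\]
a holomorphic polynomial in $z$ whose coefficients depend continuously on $(t,\zeta)$. Expanding $r_t$ around $\zeta$ and using the uniform positivity of the Levi form, one obtains constants $\eta_0, c_1 > 0$, independent of $t$ and $\zeta$, such that
\[
2\operatorname{Re}F_t(z;\zeta) \geq -r_t(z) + c_1\|z-\zeta\|^2, \qquad z \in \mathbb{B}(\zeta, \eta_0),
\]
together with a pointwise bound $|F_t(z;\zeta)| \leq C\|z-\zeta\|$ on the same ball, with $C$ uniform in $t$. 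This localizes the peak behavior near $\zeta$.

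Second I would globalize $F_t$ to a Henkin--Ramirez-type support function $\Phi_t(\cdot;\zeta)$, holomorphic on a fixed open neighborhood $\widehat{G_t}$ of $\overline{G_t}$, with $\Phi_t(\zeta;\zeta)=0$, $\operatorname{Re}\Phi_t>0$ on $\overline{G_t}\setminus\{\zeta\}$, and $\Phi_t - F_t = O(\|z-\zeta\|^2)$ near $\zeta$. The standard route is $\bar\partial$-surgery: extend $-\log F_t$ by a cutoff outside a zero-free neighborhood of $\zeta$ (the required zero-freeness of $F_t$ on $\overline{G_t}\setminus\{\zeta\}$ coming from a small outward bump of $\partial G_t$ away from $\zeta$ that preserves strict pseudoconvexity uniformly in $t$), form the resulting $\bar\partial$-closed $(0,1)$-form $\omega_t$, and solve $\bar\partial u_t = \omega_t$ with a $t$-uniform $L^\infty$-bound on $u_t$. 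The peak function is then
\[
h_t(z;\zeta) := \exp\!\bigl(-\alpha\,\Phi_t(z;\zeta)\bigr)
\]
for a suitable small $\alpha>0$; condition (a) holds by construction, (b) follows from $1-h_t \approx \alpha\Phi_t$ combined with the Lipschitz estimate on $F_t$ and the quadratic size of $\Phi_t - F_t$, and (c) follows from the uniform lower bound $\operatorname{Re}\Phi_t \geq c_2>0$ on $\{z\in\overline{G_t} : \|z-\zeta\|\geq\eta_1\}$ coming from the globalization.

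The hard part will be producing the $L^\infty$-estimate for the $\bar\partial$-solutions $u_t$ with a constant uniform in $t$, depending continuously on the parameter. This is where the hypotheses of Situation \ref{Situation} are used in earnest: one writes down the Henkin integral solution operator whose kernel is built explicitly from $F_t$, $r_t$, and $\nabla r_t$, and then condition (iii) together with the compactness of $T$ forces these kernels to vary continuously in $t$, yielding a family of $\bar\partial$-solution operators with a common $L^\infty$-operator bound. Feeding this back into the construction above produces the peak functions $h_t(\cdot;\zeta)$ with constants $d_1, d_2, \eta_1, \eta_2$ depending only on the uniform data of the family.
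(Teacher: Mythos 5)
Your overall strategy---parametric Levi polynomials made uniformly positive via condition (iii) and the compactness of $T$, a small bumping of the domains, and $\bar\partial$-surgery solved with a $t$-uniform sup-norm bound---is the same skeleton that the paper attributes to its source [\ref{L}]: compare Remark \ref{R1}, where the proof is described as setting continuously varying $\bar\partial$ problems on enlarged domains $\widetilde{G_t}\supset\overline{G_t}$ and solving them with the uniform constant coming from Theorems V.2.7 and V.3.6 of [\ref{Ran}]. The first step of your sketch is fine: compactness of $T$ plus (iii) does give a $t$- and $\zeta$-independent $\eta_0$ and $c_1$ in $2\operatorname{Re}F_t(z;\zeta)\ge -r_t(z)+c_1\|z-\zeta\|^2$ on $\mb{B}(\zeta,\eta_0)$, together with the uniform Lipschitz bound on $F_t$.

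The gap is in the globalization step, which is exactly the delicate point (the ``subtle way'' of Remark \ref{R1}). As written, you patch $-\log F_t$ with a cutoff, solve $\bar\partial u_t=\omega_t$ with $\|u_t\|_\infty\le C$, and assert that the resulting $\Phi_t$ satisfies $\Phi_t(\zeta;\zeta)=0$, $\operatorname{Re}\Phi_t>0$ on $\overline{G_t}\setminus\{\zeta\}$ and $\Phi_t-F_t=O(\|z-\zeta\|^2)$, so that (a) holds ``by construction''. It does not: after exponentiating the patched logarithm the correction enters as a factor $e^{\pm u_t}$, bounded but with uncontrolled argument, while at boundary points close to $\zeta$ one only has $\operatorname{Re}F_t\gtrsim\|z-\zeta\|^2$; a bounded (not small) phase, or an additive quadratic error with a large constant, can therefore push values out of the right half-plane, and away from the support of the cutoff the patched function is just $-u_t$, whose real part has no sign at all. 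In addition, $F_t(\cdot;\zeta)$ vanishes on a hypersurface through $\zeta$, so $-\log F_t$ is not defined near $\zeta$ outside $\overline{G_t}$ and your $\Phi_t$ is not obviously holomorphic on a full neighborhood $\widehat{G_t}\supset\overline{G_t}$ (the theorem requires holomorphy across $\zeta$, and a single $\widehat{G_t}$ serving all $\zeta\in\partial G_t$); the ``zero-freeness of $F_t$ on $\overline{G_t}\setminus\{\zeta\}$'' you invoke is false in general, since the positivity estimate holds only on $\mb{B}(\zeta,\eta_0)$. What is missing is the mechanism that renders the bounded correction harmless: either a Henkin--Ramirez/Oka--Hefer type construction in which the global support function agrees with $F_t$ near $\zeta$ up to an error with a \emph{small} constant, or a further composition (of the sort $\exp(-1/\psi_t)$ with $\operatorname{Re}\psi_t$ uniformly bounded below on $\overline{G_t}$ and blowing up at $\zeta$, plus a removable-singularity argument to cross $\zeta$), all carried out so that $\eta_2,d_1,d_2$ remain independent of $t$ and $\zeta$. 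That is the actual content of the proof of Theorem \ref{Main}, and your sketch stops just short of it.
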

\begin{rem}
The principal strength of Theorem \ref{Main} lies in the uniformity of the estimates given there: namely, all of the constants $\varepsilon, \eta_2, d_1, d_2$ can be chosen independently of $t$.
\end{rem}
\begin{rem}
The crucial point of the proof of Theorem \ref{Main} is the setting of certain continuously varying $\bar{\partial}$ problems on some domains $\widetilde{G_t}$ such that $\overline{G_t}\s\widetilde{G_t},t\in T$, and solving them in a subtle way, with uniform estimate $C$, given by Theorems V.2.7 and V.3.6 from [\ref{Ran}] and not depending on the domains $G_t$, to get the continuously varying solutions.\label{R1}
\end{rem} 
The technique mentioned in the above Remark, together with Theorem \ref{Main} itself, is also a vital ingredient of the proof of the first result given in the hereby paper. This is the following approximation result for bounded holomorphic functions defined near the boundary points of strictly pseudoconvex domains:
\begin{theorem}\label{Main2}
Let $(G_t)_{t\in T}$ be a family of strictly pseudoconvex domains as in Situation \ref{Situation}. Then there exist an  $R>0$ such that the set $G_t\cap\mb{B}(\zeta,R)$ is connected for any $t\in T,\zeta\in\partial G_t$ and for every such $R$ there exists a $\rho< R$ with the property that for any $\varepsilon>0$ and any $m\in\mb{N}$ there exists an $L=L(m,\varepsilon, R)>0$ with the property that for any $t\in T,\zeta_t\in\partial G_t, f_t\in\mc{H}^{\infty}(G_t\cap\mb{B}(\zeta_t,R)),$ and any system of pairwise different points $\mc{W}_{m,t}=\{w_1^t,\ldots,w_m^t\}\s G_t\cap\mb{B}(\zeta_t,\rho)$ there exist an $\hat{f}_t\in\mc{H}^{\infty}(G_t)$ such that
\begin{enumerate}[\indent\upshape(A)]
\item $D^{\alpha}\hat{f}_t(w_j^t)=D^{\alpha}f_t(w_j^t)$ for $|\alpha|\leq 1$ and $j=1,\ldots, m,$
\item There exists an $N=N(\varepsilon,R,\mc{W}_{m,t})$ such that $$\|\hat{f}_t\|_{G_t}\leq(L+N)\|f_t\|_{G_t\cap\mb{B}(\zeta_t,R)}.$$ Moreover, if $m=1$, then $N$ can be chosen to be zero,
\item $\|\hat{f}_t-f_t\|_{G_t\cap\mb{B}(\zeta_t,\rho)}<\varepsilon\|f\|_{G_t\cap\mb{B}(\zeta_t,R)}.$
\end{enumerate}
\end{theorem}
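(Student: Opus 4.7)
The plan is to carry out a uniform-in-$t$ Kerzman--Graham $\bar\partial$-patching argument, powered by the parameter peak functions of Theorem \ref{Main} and the uniform Henkin-type $\bar\partial$-solution operator alluded to in Remark \ref{R1}. The first step is to fix the connectedness radius: by the equi-$\mc{C}^2$ condition (\ref{C2}) and compactness of $T$, the defining functions $r_t$ are equi-$\mc{C}^2$ on $U$ with equi-positive Levi form, so a standard local straightening of $\partial G_t$ yields an $R>0$, independent of $t\in T$ and $\zeta\in\partial G_t$, for which $G_t\cap\mb{B}(\zeta,R)$ is connected. For any such $R$, apply Theorem \ref{Main} with some $\eta_1<R$ to obtain the constants $d_1,d_2,\eta_2>0$ and peak functions $h_t(\cdot;\zeta_t)$; choose $\rho\in[\eta_1,R)$ with $\rho<\min(\eta_2,1/d_1)$.

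For the approximation, fix a radial cutoff $\chi\in\mc{C}_c^\infty(\mb{B}(0,R))$ with $\chi\equiv 1$ on $\overline{\mb{B}(0,\rho')}$ for some $\rho'\in(\rho,R)$, and set $\chi_{\zeta_t}:=\chi(\cdot-\zeta_t)$. Extending $\chi_{\zeta_t}f_t$ by zero outside $\mb{B}(\zeta_t,R)$ gives a smooth bounded function on $G_t$ whose $\bar\partial$ equals $f_t\,\bar\partial\chi_{\zeta_t}$, supported in the annular slab $A_t:=\{\rho'\leq\|z-\zeta_t\|\leq R\}\cap G_t$ on which, by property (c) of Theorem \ref{Main}, $|1-h_t(\cdot;\zeta_t)|\geq 1-d_2>0$. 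For $k\in\mb{N}$ to be fixed later, the $(0,1)$-form
$$g_t:=\frac{f_t\,\bar\partial\chi_{\zeta_t}}{(1-h_t(\cdot;\zeta_t))^k}$$
is smooth and bounded on $G_t$ (in fact on a common enlargement $\widetilde{G_t}$), with $\|g_t\|_\infty\leq C_k\|f_t\|_{G_t\cap\mb{B}(\zeta_t,R)}$ and $C_k$ uniform in $t$. The parameter $\bar\partial$-machinery of [\ref{L}], resting on the $t$-uniform estimates of Theorems V.2.7 and V.3.6 in [\ref{Ran}], yields $v_t\in\mc{C}(\overline{G_t})$ with $\bar\partial v_t=g_t$ and $\|v_t\|_{G_t}\leq C\|g_t\|_\infty$ for some $t$-independent $C$. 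Setting
$$\hat f_t^{(0)}:=\chi_{\zeta_t}f_t-(1-h_t(\cdot;\zeta_t))^k v_t,$$
we obtain $\hat f_t^{(0)}\in\mc{H}^\infty(G_t)$. Property (b) of Theorem \ref{Main} gives $|1-h_t(z;\zeta_t)|\leq d_1\rho$ on $G_t\cap\mb{B}(\zeta_t,\rho)$, where $\chi_{\zeta_t}\equiv 1$; hence on that set $|\hat f_t^{(0)}-f_t|\leq CC_k(d_1\rho)^k\|f_t\|_{G_t\cap\mb{B}(\zeta_t,R)}$, which is below $\varepsilon\|f_t\|_{G_t\cap\mb{B}(\zeta_t,R)}$ once $k=k(\varepsilon,R,d_1,d_2)$ is chosen large enough (possible since $d_1\rho<1$). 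This yields (C), and the global $L^\infty$-bound on $\hat f_t^{(0)}$ supplies the constant $L$ in (B).

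The interpolation condition (A) is enforced by adding $p_t\in\mc{H}^\infty(G_t)$ with $D^\alpha p_t(w_j^t)=D^\alpha(f_t-\hat f_t^{(0)})(w_j^t)$ for $|\alpha|\leq 1$, $j=1,\ldots,m$. For $m=1$, take $p_t$ to be the unique affine polynomial realising this $1$-jet at $w_1^t$; Cauchy estimates on a ball inside $G_t\cap\mb{B}(\zeta_t,\rho)$ centered at $w_1^t$ bound its coefficients by $O(\varepsilon\|f_t\|)$, since $\hat f_t^{(0)}-f_t$ is already $\varepsilon$-small there by (C). Thus $\|p_t\|_{G_t}$ can be absorbed into $L$ and $N$ may be taken zero. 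For $m\geq 2$, invoke the standard $H^\infty$ jet-interpolation on strictly pseudoconvex domains (via a Koppelman--Henkin kernel construction) to obtain $p_t$ with $\|p_t\|_{G_t}\leq M(\mc{W}_{m,t})\|f_t\|_{G_t\cap\mb{B}(\zeta_t,R)}$; the configuration-dependent factor $M(\mc{W}_{m,t})$ supplies the term $N=N(\varepsilon,R,\mc{W}_{m,t})$ in (B). In both cases $p_t$ remains small on $G_t\cap\mb{B}(\zeta_t,\rho)$, so $\hat f_t:=\hat f_t^{(0)}+p_t$ preserves (C).

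The main obstacle is the $t$-uniformity of all constants, which rests on two independent inputs: the uniform peak-function bounds of Theorem \ref{Main} (with $d_1,d_2,\eta_2$ independent of $t$), and the uniform $\bar\partial$-estimate on the common enlargement $\widetilde{G_t}$ supplied by the parameter Henkin construction of Remark \ref{R1}. Once these are secured, the remaining bookkeeping is algebraic: coordinate $k,\rho,\varepsilon$ with $d_1,d_2$ so that $(d_1\rho)^k$ dominates the $k$-dependence of $CC_k$, and verify that the interpolation correction depends on $\mc{W}_{m,t}$ only through the allowed factor $N(\varepsilon,R,\mc{W}_{m,t})$.
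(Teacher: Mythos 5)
Your overall scheme is the same as the paper's (a Kerzman--Graham patching with the parameter peak functions of Theorem \ref{Main} and the uniform $\bar\partial$-solution of Remark \ref{R1}; you weight by $(1-h_t)^{k}$ where the paper weights by $h_t^{k}$), but two points need attention, one minor and one essential. The minor one is the calibration of $\rho$: requiring $\rho\in[\eta_1,R)$ and $\rho<\eta_2$ may be vacuous (nothing forces $\eta_2\geq\eta_1$; in the paper $\eta_2<\eta_1$), and since $\|g_t\|_\infty\lesssim(1-d_2)^{-k}\|f_t\|_{G_t\cap\mb{B}(\zeta_t,R)}$, the error on $G_t\cap\mb{B}(\zeta_t,\rho)$ is of size $\left(\frac{d_1\rho}{1-d_2}\right)^{k}\|f_t\|$, so you need $d_1\rho<1-d_2$, not merely $d_1\rho<1$. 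Both are repaired by taking $\rho$ small (and putting the inner radius $\rho'$ of the cut-off annulus above $\eta_1$, as you in effect already do), exactly as in the paper where $\rho\leq\eta_1/5$.

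The essential gap is in the interpolation step (A) and the uniformity it must carry into (B) and (C). Your correction $p_t$ has coefficients given by the $1$-jets of $f_t-\hat f_t^{(0)}$ at the $w_j^t$, and you bound them by Cauchy estimates ``on a ball inside $G_t\cap\mb{B}(\zeta_t,\rho)$ centered at $w_1^t$''. The radius of such a ball is at most $\text{dist}(w_1^t,\partial G_t)$, which can be arbitrarily small, so the first-order coefficients are only bounded by $O\!\left(\varepsilon\|f_t\|/\text{dist}(w_1^t,\partial G_t)\right)$; consequently $L$ would depend on $\mc{W}_{1,t}$, the case $m=1$ with $N=0$ fails, and for $m\geq2$ the claim that $p_t$ stays small on $G_t\cap\mb{B}(\zeta_t,\rho)$ (needed for (C)) is not established. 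Your parenthetical assertion that $g_t$ is defined ``on a common enlargement $\widetilde{G_t}$'' does not hold as stated, because $g_t$ contains the factor $f_t$, which lives only on $G_t\cap\mb{B}(\zeta_t,R)$ and does not extend across $\partial G_t$ over the annulus supporting $\bar\partial\chi_{\zeta_t}$. The missing idea is the paper's local bumping: one enlarges $G_t$ near $\zeta_t$ to a strictly pseudoconvex $G_1^t\supset G_t$ agreeing with $G_t$ outside $\mb{B}(\zeta_t,\tfrac72\rho)$ (so the form, supported where $\|z-\zeta_t\|$ is comparable to $\eta_1\gg\rho$, still extends by zero), with $\text{dist}\big(\overline{G_t}\cap\overline{\mb{B}(\zeta_t,2\rho)},\partial G_1^t\big)\geq\beta$ and the $\bar\partial$-estimate $C$ still valid on $G_1^t$, both uniformly in $t$ by compactness of $T$. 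Solving on $G_1^t$ makes the error holomorphic on $G_1^t\cap\mb{B}(\zeta_t,\eta)$, a set containing a $\beta$-collar around all of $G_t\cap\mb{B}(\zeta_t,\rho)$, so Cauchy estimates on balls of the fixed radius $\beta$ give derivative bounds independent of $t$, $\zeta_t$ and of the position of the $w_j^t$; the paper then writes explicit Lagrange-type corrections whose size is controlled by these uniform jet bounds times a configuration constant $M(\mc{W}_{m,t})$, which is what legitimately produces $N(\varepsilon,R,\mc{W}_{m,t})$ while keeping $L$ free of $t$ and of the points. Without this bumping (or some substitute providing uniform interior room for the Cauchy estimates), your construction does not yield (A)--(C) with the stated uniformity; citing a generic $H^\infty$ jet-interpolation kernel for $m\geq2$ does not close this, since the smallness of $p_t$ on $\mb{B}(\zeta_t,\rho)$ again requires uniform control of the error's first derivatives at points arbitrarily close to $\partial G_t$.
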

\begin{rem}
Notice that if $m=1$, then the estimate in (B) depends in fact only on $\varepsilon$ and $R$.
\end{rem}
\begin{rem}
Theorem \ref{Main2} is a two-directional amplification of Theorem 2 from [\ref{Gra}] (see also Theorem 19.1.3 in [\ref{JP}]). Firstly, it gives the independence of constants $R,\rho,$ and $L$ of parameter $t\in T$, and secondly - in the interpolation problem there is no limitation for the number of points chosen.
\end{rem}
With Theorems \ref{Main} and \ref{Main2} at hand, we are able to get some stability results for the boundary behaviour of Kobayashi and Carath\'eodory pseudodistances on strictly pseudoconvex domains. For arbitrary domain $G$ the Carath\'eodory pseudodistance is defined as
$$
\boldsymbol{c}_G(z,w):=\sup\{\boldsymbol{p}(0,f(w)):f\in\mc{O}(G,\mb{D}),f(z)=0\},\quad z,w\in G,
$$
while the Kobayashi pseudodistance may be expressed as
\begin{multline*}
\boldsymbol{k}_G(z,w):=\inf\big\{\sum_{j=1}^N\boldsymbol{p}(\xi_j,\zeta_j):N\in\mathbb{N},\xi_j,\zeta_j\in\mathbb{D},\text{\ and\ }\exists p_o,\ldots,p_N\in G:\\ p_0=z,p_N=w,\exists f_j\in\mathcal{O}(\mathbb{D},G):f_j(\xi_j)=p_{j-1},f_j(\zeta_j)=p_j,j=1,\ldots, N\big\}, \\ z,w\in G,
\end{multline*}
with $\boldsymbol{p}$ standing for the Poincar\'e distance on $\mb{D}$. Note that it always holds true that $\boldsymbol{c}_G\leq\boldsymbol{k}_G.$ For good exposition on the topic of these and other holomorphically contractible objects, we refer the Reader to the monograph [\ref{JP}].\\ 
\indent In [\ref{FR}], some upper and lower estimates for the boundary behaviour of the Kobayashi pseudodistance $\boldsymbol{k}_G$ on strictly pseudoconvex domain $G$ are given. It is showed there that in the situation just described, for any couple of distinct points $\zeta,\xi\in\partial G$ there exist constants $K$ and $C$ such that
$$
\boldsymbol{k}_{G}(z,w)\geq-\frac{1}{2}\log\text{\rm dist}(z,\partial G)-\frac{1}{2}\log\text{\rm dist}(w, \partial G)-K
$$
whenever $z,w\in G$ are such that $z$ is close to $\zeta$ and $w$ is close to $\xi$, and
\begin{multline*}
\boldsymbol{k}_{G}(z,w)\leq-\frac{1}{2}(\log\text{\rm dist}(z,\partial G)+\log\text{\rm dist}(w,\partial G))\\+\frac{1}{2}(\log(\text{\rm dist}(z,\partial G)+\|z-w\|)+\log(\text{\rm dist}(w,\partial G)+\|z-w\|))+C,
\end{multline*}
whenever $z,w\in G$ are close to $\zeta,$ cf. Corollary 2.4 and Proposition 2.5 in [\ref{FR}] (observe that for the upper estimate the strict pseudoconvexity is not needed - in [\ref{FR}] the domain $G$ is only assumed to have $\mc{C}^{1+\varepsilon}$ boundary). We prove that given $(G_t)_{t\in T}$, a family of strictly pseudoconvex domains as in Situation \ref{Situation}, the estimates as above are uniform with respect to $t\in T$ and $\zeta,\xi\in\partial G_t$, i.e. the bounds $K$ and $C$ given there can be taken independently of $t\in T$ and of $\zeta,\xi\in\partial G_t$ - in the first case depending only on $\|\zeta-\xi\|$ (with the domains $G_t$ not necessarily strictly pseudoconvex, when it comes to the upper estimate), see Propositions \ref{Prop1} and \ref{Prop2} below. These results are inspired by Propositions 9.1 and 9.2 from [\ref{MV}]. In correspondence to that paper, note that the role of the set of parameters $T$ is there played by a convergent sequence of numbers with its limit added. We also give some estimates in this spirit for the Carath\'eodory pseudodistence $\boldsymbol{c}_D$ - see Propositions \ref{Prop-1} and \ref{Prop0} (compare with Theorem 19.2.1, Corollary 19.2.2, and Proposition 19.2.4 from [\ref{JP}]).\\
\indent At the very end, as another corollary from Theorems \ref{Main} and \ref{Main2}, we deliver some uniform localization result for Carath\'eodory-Reiffen pseudometric - Proposition \ref{Reiffen} (this is parameter version of Proposition 6 from [\ref{Gra}], see also Theorem 19.3.1 in [\ref{JP}]).\\
\indent We close the Introducion with recalling that a bounded domain $G\s\mb{C}^n$ is called a \emph{strictly pseudoconvex} if there exist a neighborhood $U$ of $\partial G$ and a \emph{defining function} $r:U\rightarrow\mb{R}$ of class $\mc{C}^2$ on $U$ and such that
\begin{enumerate}
{\item[(I)] $G\cap U=\{z\in U:r(z)<0\}$,\label{Condition1}}
{\item[(II)] $(\mb{C}^n\setminus\overline{G})\cap U=\{z\in U:r(z)>0\},\label{Condition2}$}
{\item[(III)] $\nabla r(z)\neq 0$ for $z\in \partial G,$ where $\nabla r(z):=\left(\frac{\partial r}{\partial\overline{z}_1}(z),\cdots,\frac{\partial r}{\partial\overline{z}_n}(z)\right)$,\label{Condition3}}
\end{enumerate}
together with $$\mc{L}_r(z;X)>0\text{\ for\ } z\in\partial G\text{\ and\ nonzero\ }X\in T_z^{\mb{C}}(\partial D),$$  
where $\mc{L}_r$ denotes the Levi form of $r$ and $T_z^{\mb{C}}(\partial G)$ is the complex tangent space to $\partial G$ at $z$.\\ 
\indent It is known that $U$ and $r$ can be chosen to satisfy (I)-(III) and, additionally:
\begin{enumerate}
{\item[(IV)] $\mc{L}_r(z;X)>0$ for $z\in U$ and all nonzero $X\in\mb{C}^n,$\label{Condition4}}
\end{enumerate}
cf. [\ref{Kra1}].\\
\indent The proof of Theorem \ref{Main2} is presented in Section 2, while the uniform estimates for the boundary behaviour of Kobayashi and Carath\'eodory pseudodistances come in Section 3.
\section{Proof of Theorem \ref{Main2}}
\begin{proof}[Proof of Theorem \ref{Main2}]
Set $\eta_2<\eta_1,d_1,d_2<1,\widehat{G_t},$ and $h_t(\cdot;\zeta)$ for $t\in T,\zeta\in\partial G_t$ according to Theorem \ref{Main}, where $\eta_1$ is small enough to assure that the set $G_t\cap\mb{B}(\zeta,R)$ is connected for every $t\in T$ and $\zeta\in\partial G_t,$ where $R:=2\eta_1.$ Replacing $h_t$ with $\frac{h_t+3}{4}$ we may assume that $|h_t(z;\zeta)|\geq\frac{1}{2}, z\in\overline{G_t},\zeta\in\partial G_t.$\\
Let $d_3\in(d_2,1)$ and choose $0<\eta\leq\eta_2$ such that for any $t\in T$ we have $\mb{B}(\zeta;2\eta)\s\widehat{G_t}$ for all $\zeta\in\partial G_t$ as well as $|h_t(z;\zeta)|\geq d_3$ whenever $\zeta\in\partial G_t$ and $\|z-\zeta\|\leq\eta$ (this is possible because of the uniform choice of $d_1$ in theorem \ref{Main}). Define $\rho:=\min\{\frac{\eta}{2},\frac{\eta_1}{5}\}.$\\ 
For a fixed $t\in T$ there are points $\zeta_1^t,\ldots,\zeta_{N_t}^t\in\partial G_t$ such that $$\displaystyle{\partial G_t\s\bigcup_{j=1}^{N_t}\mb{B}(\zeta_j^t,\rho)}.$$\\
For any $j\in\{1,\ldots, N_t\}$ we modify the domain $G_t$ near the boundary point $\zeta_j^t$ in order to get a strictly pseudoconvex domain $G_j^t$ satisfying
\begin{enumerate}[\indent(1')]
\item $G_t\s G_j^t\s\widehat{G_t}\cap G_t^{(\eta)}$ (where $G_t^{(\eta)}$ denotes the $\eta$-hull of $G_t$)
\item $\overline{G_t}\cap\overline{\mb{B}(\zeta_j^t,2\rho)}\s\s G_j^t$ and dist$(\overline{G_t}\cap\overline{\mb{B}(\zeta_j^t,2\rho)},\partial G_j^t)\geq \beta>0$ with $\beta$ independent of $j$
\item $G_t\setminus\mb{B}(\zeta_j^t,\frac{7}{2}\rho)=G_j^t\setminus\mb{B}(\zeta_j^t,\frac{7}{2}\rho)$
\item The estimate $C$ for the solution of $\bar{\partial}$-problem for $G_t$ from Remark \ref{R1} is good for $G_j^t$. 
\end{enumerate}
Observe that for $s$ close enough to $t$ we may choose points $\zeta_1^s,\ldots,\zeta_{N_s}^s\in\partial G_s$ such that $N_s=N_t,$ $\zeta_j^s$ is close to $\zeta_j^t$ (with arbitrarily prescribed distance), $\displaystyle{\partial G_s\s\bigcup_{j=1}^{N_s}\mb{B}(\zeta_j^s,\rho)},$ and with the property that for any $j\in\{1,\ldots,N_s\}$ we can find strictly pseudoconvex deformation $G_j^s$ of $G_s$ near $\zeta_j^s$ such that
\begin{enumerate}
\item $G_s\s G_j^s\s\widehat{G_s}\cap G_s^{(\eta)}$ 
\item $\overline{G_s}\cap\overline{\mb{B}(\zeta_j^s,2\rho)}\s\s G_j^s$ and dist$(\overline{G_s}\cap\overline{\mb{B}(\zeta_j^s,2\rho)},\partial G_j^s)\geq \frac{\beta}{2}>0$
\item $G_s\setminus\mb{B}(\zeta_j^s,4\rho)=G_j^s\setminus\mb{B}(\zeta_j^s,4\rho)$
\item The estimate $C$ for the solution of $\bar{\partial}$-problem from Remark \ref{R1} is good for $G_j^s$. 
\end{enumerate}
Using the compactness of $T$, we see that the constants $C$ and $\beta$ do not depend on $t$.\\
Fix now $t=t_0\in T$ and $\zeta_0\in\partial G_t.$ Let $f\in\mc{H}^{\infty}(G_t\cap\mb{B}(\zeta_0,R))$ and take a system of pairwise different points $\mc{W}_{m,t}=\{w_1^t,\ldots,w_m^t\}\s G_t\cap\mb{B}(\zeta_0,\rho)$.\\
There exists a $j_0\in\{1,\ldots N_t\}$ such that $\zeta_0\in\mb{B}(\zeta_{j_0}^t,\rho)$. To simplify the notation, let us assume without loss of generality that $j_0=1.$\\
Choose a $\chi\in\mc{C}^{\infty}(\mb{C}^n,[0,1])$ such that $\chi\equiv 1$ on $\mb{B}(\zeta_0,\frac{6\eta_1}{5})$ and $\chi\equiv 0$ outside $\mb{B}(\zeta_0,\frac{9\eta_1}{5})$ and define $\alpha_t:=(\bar{\partial}\chi)f$ on $G_t\cap\mb{B}(\zeta_0,R)=G_t\cap\mb{B}(\zeta_0,2\eta_1)$ and $\alpha_t:=0$ on $G_t\setminus\mb{B}(\zeta_0,2\eta_1)$. Note that in view of the fact that $\alpha_t\equiv 0$ on $(G_t\cap\mb{B}(\zeta_0,\frac{6\eta_1}{5}))\cup(G_t\setminus\mb{B}(\zeta_0,\frac{9\eta_1}{5}))$, after trivial extension by zero, it can be treated as a $\bar{\partial}$-closed $(0,1)$-form of class $\mc{C}^{\infty}$ on $G_1^t$.\\
For $k\in\mb{N}$ (this will be specified later) consider the equation
\begin{equation}
\bar{\partial}v^t_k=(h_t(\cdot;\zeta_0))^k\alpha.\label{E1}
\end{equation}
The results mentioned in Remark \ref{R1} give a constant $C$, independent on $t$ and $j\in\{1,\ldots, N_t\}$, and a solution $v_k^t\in\mc{C}^{\infty}(G_1^t)$ of the problem (\ref{E1}) such that
$$
\|v_k^t\|_{G_1^t}\leq C\|(h_t(\cdot;\zeta_0))^k\|_{\text{spt}\alpha}\|\alpha\|_{G_1^t}.
$$
Further estimation gives
$$
\|v_k^t\|_{G_1^t}\leq CC_1 d_2^k\|f\|_{G_t\cap\mb{B}(\zeta_0,R)},
$$
with the constant $C_1$ depending only on $\eta_1$ (in particular, not depending on $t$).\\
Define the function $f_k:=\chi f-h_t(\cdot;\zeta_0)^{-k}v_k^t$ and observe it is holomorphic on $G_t$. Consequently, the function $h_t(\cdot;\zeta_0)^{-k}v_k^t$ is holomorphic on $G_1^t\cap\mb{B}(\zeta_0,\eta)$ (on the set $G_t\cap\mb{B}(\zeta_0,\eta)$ it follows from the holomorphicity of $f_k$, and on the remaining part - from the triviality of extension of $\alpha_t$ by zero and from the choice of $\eta$). Furthermore
$$
\|h_t(\cdot;\zeta_0)^{-k}v_k^t\|_{G_1^t\cap\mb{B}(\zeta_0,\eta)}\leq CC_1\left(\frac{d_2}{d_3}\right)^k\|f\|_{G_t\cap\mb{B}(\zeta_0,R)}.
$$
Note that for $z\in\overline{G_t}\cap\mb{B}(\zeta_0,\rho)$ we have $\|z-\zeta_1^t\|\leq \|z-\zeta_0\|+\|\zeta_0-\zeta_1^t\|\leq 2\rho.$ Therefore, $\overline{G_t}\cap\mb{B}(\zeta_0,\rho)\s\overline{G_t}\cap\mb{B}(\zeta_1^t,2\rho)\s\s G_1^t$.\\
On the set $G_t\cap\mb{B}(\zeta_0,\eta)$ we have the equality $f_k-\chi f=-h_t(\cdot;\zeta_0)^{-k}v_k,$ and the latter function is holomorphic on bigger set $G_1^t\cap\mb{B}(\zeta_0,\eta).$ Therefore, for $z\in G_t\cap\mb{B}(\zeta_0,\rho)$ we have
$$
\left|\frac{\partial f_k}{\partial z_j}(z)-\frac{\partial f}{\partial z_j}(z)\right|=\left|\frac{\partial}{\partial z_j}(h_t(\cdot;\zeta_0)^{-k}v_k)(z)\right|\leq\frac{CC_1}{L_1}\left(\frac{d_2}{d_3}\right)^k\|f\|_{G_t\cap\mb{B}(\zeta_0,R)},
$$ 
where the last inequality is a consequence of the Cauchy inequalities and in virtue of $(2)$ it may be chosen independently of $t,\zeta_0\in\partial G_t$, and $z\in G_t\cap\mb{B}(\zeta_0,\rho)$. The same argument gives
$$
\|f_k-f\|_{G_t\cap\mb{B}(\zeta_0,\eta)}\leq\frac{CC_1}{L_1}\left(\frac{d_2}{d_3}\right)^k\|f\|_{G_t\cap\mb{B}(\zeta_0,R)}.
$$
Set $\varepsilon>0.$ The remaining part of proof depends on $m$.\\
\textbf{Case 1.} $m=1$.\\
Put $w=w^t_1.$ Define $\hat{f}_k\in\mc{O}(G_t)$ by $\hat{f}_k(z):=f_k(z)+p(z),$ where
$$
p(z):=f(w)-f_k(w)+\sum_{j=1}^n\left(\frac{\partial f}{\partial z_j}(w)-\frac{\partial f_k}{\partial z_j}(w)\right)(z_j-w_j).
$$
It can be easily checked that $\hat{f}_k(w)=f(w),$ as well as $\frac{\partial \hat{f}_k}{\partial z_j}(w)=\frac{\partial f}{\partial z_j}(w).$ Furthermore,
\begin{multline*}
\|\hat{f}_k-f\|_{G_t\cap\mb{B}(\zeta_0,\rho)}\leq \|f_k-f\|_{G_t\cap\mb{B}(\zeta_0,\rho)}+|f(w)-f_k(w)|\\+n\text{diam}U\left|\frac{\partial f}{\partial z_j}(w)-\frac{\partial f_k}{\partial z_j}(w)\right|\leq (2+n\text{diam}U)\frac{CC_1}{L_1}\left(\frac{d_2}{d_3}\right)^k\|f\|_{G_t\cap\mb{B}(\zeta_0,R)}.
\end{multline*}
Let finally $k_0\in\mb{N}$ so large that $(2+n\text{diam}U)\frac{CC_1}{L_1}\left(\frac{d_2}{d_3}\right)^{k_0}\leq\varepsilon$ and define $\hat{f}:=\hat{f}_{k_0}.$ Observe that $k_0$ depends only on $\varepsilon$ and $\eta_1.$ It is left to estimate the norm of the latter function:
\begin{multline*}
\|\hat{f}\|_{G_t}\leq\|f_{k_0}\|_{G_t}+|f(w)-f_{k_0}(w)|+n\text{diam}U\left|\frac{\partial f}{\partial z_j}(w)-\frac{\partial f_{k_0}}{\partial z_j}(w)\right|\\\leq\|f_{k_0}\|_{G_t}+\varepsilon\|f\|_{G_t\cap\mb{B}(\zeta_0,R)}.
\end{multline*}
This, together with the estimate
\begin{equation}
\|f_{k_0}\|_{G_t}\leq\|\chi f\|_{G_t}+\|(h_t(\cdot;\zeta_0))^{-k_0}v_k^t\|_{G_t}\leq(1+2^{k_0}CC_1 d_2^{k_0})\|f\|_{G_t\cap\mb{B}(\zeta_0,R)}\label{E2}
\end{equation}
gives the conclusion with $L:=1+2^{k_0}CC_1 d_2^{k_0}+\varepsilon$ and $N=0.$\\
\textbf{Case 2.} $m\geq 2.$\\
We introduce some useful notation: for pairwise distinct complex numbers $w^1,\ldots, w^m$, the basis Lagrange polynomials are defined as
$$
l_i(z)=l_i^m(z):=\prod_{j=1,j\neq i}^m\frac{z-w^j}{w^i-w^j},\quad i=1,\ldots, m.
$$ 
Given a $z=(z_1,\ldots,z_n)$ and $z^1=(z_1^1,\ldots,z_n^1),\ldots,z^m=(z_1^m,\ldots,z^m_n)\in\mb{C}^n$ let us put
$$
l_{i,j}(z_j):=\prod_{k=1,k\neq i}^m\frac{z_j-w^k_j}{w^i_j-w^k_j},\quad i=1,\ldots,m,j=1,\ldots,n.
$$ 
Put $w^i:=w^t_i,i=1,\ldots, m.$ Assume first that $w^{i_1}_j\neq w^{i_2}_j$ for $j=1,\ldots, n$ whenever $i_1\neq i_2$. Define $\hat{f}_k\in\mc{O}(G_t)$ by $\hat{f}_k(z):=f_k(z)+p(z),$ where
\begin{multline*}
p(z):=\\\sum_{i=1}^m(f(w^i)-f_k(w^i))\left[\left(1+\frac{2}{n}\sum_{j=1}^n\frac{\partial l_{i,j}}{\partial z_j}(w^i_j)(w^i_j-z_j)\right)\frac{1}{n}\sum_{j=1}^n\left(l_{i,j}(z_j)\right)^2\right]\\
+\sum_{i=1}^m\left(\sum_{j=1}^n\left(\frac{\partial f}{\partial z_j}(w^i)-\frac{\partial f_k}{\partial z_j}(w^i)\right)(z_j-w^i_j)(l_{i,j}(z_j))^2\right).
\end{multline*}
One can verify that $\hat{f}_k(w^i)=f(w^i)$ and $\frac{\partial f_k}{\partial z_j}(w^i)=\frac{\partial f}{\partial z_j}(w^i)$
for $i=1,\ldots,m$ and $j=1,\ldots, n$. We estimate
\begin{multline*}\|\hat{f}_k-f\|_{G_t\cap\mb{B}(\zeta_0,\rho)}\\\leq \|f_k-f\|_{G_t\cap\mb{B}(\zeta_0,\rho)}+(M(1+2M\text{\rm diam}U))\sum_{i=1}^m|f(w^i)-f_k(w^i)|\\+M\text{\rm diam}U\sum_{i=1}^m\sum_{i=1}^n\left|\frac{\partial f}{\partial z_j}(w^i)-\frac{\partial f_k}{\partial z_j}(w^i)\right|\\\leq\frac{CC_1}{L_1}\left(\frac{d_2}{d_3}\right)^k(1+Mm(1+M(2+n)\text{\rm diam}U)\|f\|_{G_t\cap\mb{B}(\zeta_0,R)},
\end{multline*}
where $M=M(\mc{W}_{m,t})$. The last term is smaller than $\varepsilon\|f\|_{G_t\cap\mb{B}(\zeta_0,R)},$ provided that $k=k_0$ is sufficiently large (observe this choice of $k$ is independent of $f$). Then, performing similar computations, for $\hat{f}:=\hat{f}_{k_0}$, because of (\ref{E2}), we get
\begin{multline*}\|\hat{f}\|_{G_t}\leq\|{f_{k_0}}\|_{G_t}+\frac{CC_1}{L_1}\left(\frac{d_2}{d_3}\right)^{k_0}(Mm(1+M(2+n)\text{\rm diam}U)\|f\|_{G_t\cap\mb{B}(\zeta_0,R)}\\
\leq\left(1+2^{k_0}CC_1d_2^{k_0}+\frac{CC_1}{L_1}\left(\frac{d_2}{d_3}\right)^{k_0}\left(Mm(1+M(2+n)\text{\rm diam}U\right)\right)\|f\|_{G_t\cap\mb{B}(\zeta_0,R)}\\
=:(L+N)\|f\|_{G_t\cap\mb{B}(\zeta_0,R)}.
\end{multline*}
Let us pass to the remaining case, namely: the one where we assume that some of points $w^1,\ldots, w^m$ have at least one common coordinate. For $i\in\{1,\ldots, m\}, j\in\{1,\ldots, n\}$ and $z=(z_1,\ldots, z_n)\in\mb{C}^n$ define
$$
\tilde{l}_{i,j}(z_j)
\prod_{k=1, k\neq i, w^k_j\neq w^i_j}^m\frac{z_j-w^k_j}{w^i_j-w_j^k} \quad \text{\ if\ }\exists k\neq i:w^i_j\neq w^k_j,
$$ 
and 
$$
\tilde{l}_{i,j}(z_j)\equiv 0 \quad \text{otherwise}.
$$
Observe that for a fixed $i$ not all of $\tilde{l}_{i,j}$ are zero (since the points $w^1,\ldots, w^m$ are pairwise different). Therefore, we can define
$$
A_i:=\{j\in\{1,\ldots, n\}:\tilde{l}_{i,j} \text{\ is\ nonzero}\}\neq\varnothing,\quad n_i:=|A_i|,\quad 1=1,\ldots, m.
$$
Observe that if $j\notin A_i$, then 
$$B_{i,j}:=\{(k_i,m_i)\in\{1,\ldots,m\}\times(\{1,\ldots,n\}\setminus \{j\}):w^i_{m_i}\neq w^{k_i}_{m_i}\}\neq\varnothing.$$
Define
\begin{multline*}
p(z):=\\\sum_{i=1}^m(f(w^i)-f_k(w^i))\left[\left(1+\frac{2}{n_i}\sum_{j\in A_i}\frac{\partial \tilde{l}_{i,j}}{\partial z_j}(w^i_j)(w^i_j-z_j)\right)\frac{1}{n_i}\sum_{j\in A_i}\left(\tilde{l}_{i,j}(z_j)\right)^2\right]\\
+\sum_{i=1}^m\left(\sum_{j\in A_i}\left(\frac{\partial f}{\partial z_j}(w^i)-\frac{\partial f_k}{\partial z_j}(w^i)\right)(z_j-w^i_j)(\tilde{l}_{i,j}(z_j))^2\right)\\
+\sum_{i=1}^m\left(\sum_{j\notin A_i}\left(\frac{\partial f}{\partial z_j}(w^i)-\frac{\partial f_k}{\partial z_j}(w^i)\right)(z_j-w^i_j)\prod_{(k_i,m_i)\in B_{i,j}}\left(\frac{z_{m_i}-w_{m_i}^{k_i}}{w^i_{m_i}-w_{m_i}^{k_i}}\right)^2\right)
\end{multline*}
and $\hat{f}(z)=\hat{f}_k(z):=f(z)+p(z)$ with $k$ sufficiently large, and we end the proof carrying out similar computations as before.
\end{proof}
\begin{rem}
In Theorem \ref{Main2} one can require in the conclusion that $\|\hat{f}_t-f_t\|_{G_t\cap\mb{B}(\zeta_t,\rho)}<\varepsilon$. Analyzing the proof of our result, we see that it is possible to get this kind of estimate. There is, however, a price we have to pay - the constants $L,N$ are not any more independent of $f$.
\end{rem}
\begin{rem}
In [\ref{Gra}] it is stated that the estimate in (C) holds true also for the derivatives of $\hat{f}$ and $f$ up to previously prescribed order (eventually with interpolation at one given point). The same result is possible to get here - with $L$ still independent of $t.$
\end{rem}
\section{Boundary behaviour of Kobayashi and Carath\'eodory pseudodistances}
As indicated in the Introducion, few of the results to be presented are corollaries from Theorems \ref{Main} and \ref{Main2}. We start with the uniform estimates for the boundary behaviour of the Carath\'eodory pseudodistance.
\begin{prop}
Let $(G_t)_{t\in T}$ be a family of strictly pseudoconvex domains as in Situation \ref{Situation}. Let $K\s\mb{C}^n$ be a compact set such that $K\s G_t$ for any $t\in T.$ Then there exists a constant $C>0$ such that for every $t\in T$
$$
\boldsymbol{c}_{G_t}(z,w)\geq-\frac{1}{2}\log\text{\rm dist}(z,\partial G_t)-C
$$
whenever $z\in K,w\in G_t.$\\
In particular, if $(G_j)_{j\in\mb{N}}$ is a sequence of strictly pseudoconvex domains with $\mc{C}^2$-smooth boundaries such that $G_j$ converges to $G$, a strictly pseudoconvex domain with $\mc{C}^2$-smooth boundary with respect to $\mc{C}^2$ topology on domains, then for every compact $K\s G$ there exist a constant $C_1>0$ such that for sufficiently large $j$
$$
\boldsymbol{c}_{G_j}(z,w)\geq-\frac{1}{2}\log\text{\rm dist}(z,\partial G_j)-C_1
$$ 
whenever $z\in K,w\in G_j.$\label{Prop-1}
\end{prop}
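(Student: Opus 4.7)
The approach will be the standard Graham-style argument for lower bounds on $\boldsymbol{c}_{G_t}$ near the boundary: one manufactures a test function in $\mc{O}(G_t, \mb{D})$ vanishing at the interior point by composing a peak function at a nearby boundary point with a M\"obius automorphism of $\mb{D}$. The only issue is to keep every constant uniform in $t$, and this is exactly what Theorem \ref{Main} is designed to give, so the work reduces to bookkeeping.

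More concretely, I would first use (iii) of Situation \ref{Situation} together with the compactness of $T$ to produce a $\delta_0 > 0$ with $\text{dist}(K, \partial G_t) \geq \delta_0$ for all $t \in T$. Fix $\eta_1 < \min\{\varepsilon, \delta_0\}$ with $\varepsilon$ from Theorem \ref{Main} and obtain $\eta_2 > 0$, $d_1 > 0$, $d_2 \in (0,1)$, and peak functions $h_t(\cdot; \zeta)$. For $z \in K$, $w \in G_t$, the asserted inequality is trivial unless $w$ is close to $\partial G_t$ (say, at distance $< \eta_2$), which I then assume, picking $\zeta \in \partial G_t$ realizing $\text{dist}(w, \partial G_t)$. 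Since $\|z - \zeta\| \geq \delta_0 \geq \eta_1$, (c) yields $|h_t(z;\zeta)| \leq d_2 < 1$, and (b) yields $|1 - h_t(w;\zeta)| \leq d_1 \text{dist}(w, \partial G_t)$. Setting $\alpha := h_t(z;\zeta)$, the M\"obius composition
$$\varphi_t := \frac{h_t(\cdot;\zeta) - \alpha}{1 - \bar\alpha\, h_t(\cdot;\zeta)} \in \mc{O}(G_t, \mb{D})$$
vanishes at $z$, and hence $\boldsymbol{c}_{G_t}(z,w) \geq \boldsymbol{p}(0, \varphi_t(w)) \geq -\tfrac{1}{2}\log(1 - |\varphi_t(w)|^2)$. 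Feeding the above bounds into the identity
$$1 - |\varphi_t(w)|^2 = \frac{(1-|\alpha|^2)(1-|h_t(w;\zeta)|^2)}{|1 - \bar\alpha\, h_t(w;\zeta)|^2},$$
together with $1 - |h_t(w;\zeta)|^2 \leq 2|1 - h_t(w;\zeta)|$ and $|1 - \bar\alpha h_t(w;\zeta)| \geq 1 - d_2$, produces $1 - |\varphi_t(w)|^2 \leq A \, \text{dist}(w, \partial G_t)$ with $A$ independent of $t, z, w, \zeta$; the claim follows by taking logarithms.

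For the \emph{In particular} clause, apply the general statement to the compact index set $T := \{0\} \cup \{1/j : j \geq j_0\}$ with $G_0 := G$ and $G_{1/j} := G_j$, where $j_0$ is large enough that $K \subset G_j$ for $j \geq j_0$; the $\mc{C}^2$-convergence $G_j \to G$ supplies condition (iii) of Situation \ref{Situation}. The main obstacle, if one wishes to call it that, lies not in the analytic manipulations, which are routine, but in ensuring that every intermediate constant ($\delta_0$, $d_1$, $d_2$, $\eta_2$, and ultimately $A$) is $t$-independent; this is inherited precisely from the uniformity clauses of Theorem \ref{Main} combined with the compactness of $T$, so no genuinely new difficulty arises.
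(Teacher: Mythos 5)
Your argument is correct and is essentially the paper's own proof: the paper simply defers to the peak-function/M\"obius argument of Theorem 19.2.1 in Jarnicki--Pflug, with Theorem \ref{Main} and the compactness of $T$ (via (iii) of Situation \ref{Situation}) supplying the $t$-independent constants $\eta_1,\eta_2,d_1,d_2$ and the uniform threshold (your $\delta_0$, the paper's $\varepsilon_0$), and the sequence case handled exactly by your compact parameter set $\{0\}\cup\{1/j\}$. Note only that your estimate naturally comes out with $\text{dist}(w,\partial G_t)$ rather than $\text{dist}(z,\partial G_t)$ for $z\in K$; that is the intended, non-trivial reading of the statement, in line with the cited Theorem 19.2.1.
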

\begin{proof}
This is a consequence of Theorem \ref{Main}. Observe that the proof may be carried out along the lines of the proof of Theorem 19.2.1 from [\ref{JP}]. Note that thanks to Theorem \ref{Main} the constants $\eta_1,\eta_2,d_1,$ and $d_2$ may be chosen independently of $t\in T$. Also, utilizing the compactness of $T$ together with (iii) from Situation \ref{Situation}, we see that it is possible to get an $\varepsilon_0$ in the proof of Theorem 19.2.1 from [\ref{JP}] in such a way, that it does not depend on $t$, as well.
\end{proof}
\begin{cor}\label{Cor1}
Let $(G_t)_{t\in T}$ be a family of strictly pseudoconvex domains as in Situation \ref{Situation}. Let $\varepsilon>0.$ Then there exist positive constants $\rho_2<\rho_1<\varepsilon$ and $C>0$ such that for every $t\in T$ and every $\zeta\in\partial G_t$ we have
$$
\boldsymbol{c}_{G_t}(z,w)\geq-\frac{1}{2}\log\text{\rm dist}(z,\partial G_t)-C
$$
whenever $z\in G_t\cap\mb{B}(\zeta,\rho_2)$ and $w\in G_t\setminus\mb{B}(\zeta,\rho_1).$\\
In particular, if $(G_j)_{j\in\mb{N}}$ is a sequence of strictly pseudoconvex domains with $\mc{C}^2$-smooth boundaries such that $G_j$ converges to $G$, a strictly pseudoconvex domain with $\mc{C}^2$-smooth boundary with respect to $\mc{C}^2$ topology on domains, and if $\varepsilon>0$ is given, then there exist positive constants $\rho_2<\rho_1<\varepsilon$ and $C_1>0$ such that for every $j\in\mb{N}$ and every $\zeta\in\partial G_j$ we have
$$
\boldsymbol{c}_{G_j}(z,w)\geq-\frac{1}{2}\log\text{\rm dist}(z,\partial G_j)-C_1
$$
whenever $z\in G_j\cap\mb{B}(\zeta,\rho_2)$ and $w\in G_j\setminus\mb{B}(\zeta,\rho_1).$
\end{cor}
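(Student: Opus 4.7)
The plan is to use Theorem \ref{Main} directly to exhibit, for each admissible pair $(z,w)$, a Carath\'eodory competitor $f\in\mc{O}(G_t,\mb{D})$ with $f(z)=0$ and $|f(w)|$ so close to $1$ that $\boldsymbol{p}(0,f(w))$ grows like $-\tfrac{1}{2}\log\text{\rm dist}(z,\partial G_t)$. Concretely, I would first apply Theorem \ref{Main} with $\eta_1$ chosen small (in particular $\eta_1<\varepsilon/2$ and $\eta_1$ admissible in Theorem \ref{Main}), obtaining uniform constants $\eta_2,d_1>0$, $d_2\in(0,1)$ and peak functions $h_t(\cdot;\zeta)\in\mc{O}(\widehat{G_t})$ satisfying (a)--(c). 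Then I would set $\rho_2$ sufficiently small, say $\rho_2\leq\eta_2$ and $\rho_2<(\varepsilon-\eta_1)/3$, and put $\rho_1:=\eta_1+2\rho_2$, so that $\rho_2<\rho_1<\varepsilon$.

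Given $t\in T$, $\zeta\in\partial G_t$, $z\in G_t\cap\mb{B}(\zeta,\rho_2)$ and $w\in G_t\setminus\mb{B}(\zeta,\rho_1)$, I would pick a boundary point $\zeta^{\ast}\in\partial G_t$ realizing $\|z-\zeta^{\ast}\|=\text{\rm dist}(z,\partial G_t)$ (existence by compactness of $\partial G_t$; uniqueness is not needed). Then $\|\zeta-\zeta^{\ast}\|\leq 2\rho_2$, hence $\|w-\zeta^{\ast}\|\geq\rho_1-2\rho_2=\eta_1$, and $\|z-\zeta^{\ast}\|\leq\rho_2\leq\eta_2$. Writing $u:=h_t(z;\zeta^{\ast})$ and $v:=h_t(w;\zeta^{\ast})$, I would consider the function
$$f(z'):=\frac{h_t(z';\zeta^{\ast})-u}{1-\bar{u}\,h_t(z';\zeta^{\ast})},\quad z'\in G_t,$$
which is the composition of $h_t(\cdot;\zeta^{\ast})\colon G_t\to\mb{D}$ (by (a)) with the M\"obius automorphism of $\mb{D}$ sending $u$ to $0$, hence a holomorphic map $G_t\to\mb{D}$ with $f(z)=0$.

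From Theorem \ref{Main}(b), $|1-u|\leq d_1\|z-\zeta^{\ast}\|=d_1\,\text{\rm dist}(z,\partial G_t)$, whence $1-|u|^2\leq 2|1-u|\leq 2d_1\,\text{\rm dist}(z,\partial G_t)$; from (c), $|v|\leq d_2$. Combining the standard M\"obius identity $1-|f(w)|^2=(1-|u|^2)(1-|v|^2)/|1-\bar{u}v|^2$ with $|1-\bar{u}v|\geq 1-|v|\geq 1-d_2$, I would obtain $1-|f(w)|^2\leq 2d_1\,\text{\rm dist}(z,\partial G_t)/(1-d_2)^2$. Since $f(z)=0$, this gives
$$\boldsymbol{c}_{G_t}(z,w)\geq\boldsymbol{p}(0,f(w))\geq -\tfrac{1}{2}\log(1-|f(w)|^2)\geq -\tfrac{1}{2}\log\text{\rm dist}(z,\partial G_t)-C,$$
with $C:=\tfrac{1}{2}\log(2d_1/(1-d_2)^2)$ uniform in $t$, $\zeta$, $z$, $w$.

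The ``in particular'' statement is handled by encoding the sequence $(G_j)$ together with its limit $G$ as the family indexed by $T:=\{1/j:j\in\mb{N}\}\cup\{0\}$, with $G_{1/j}:=G_j$ and $G_0:=G$; the $\mc{C}^2$-convergence hypothesis furnishes Situation \ref{Situation}, and the first part applies for all sufficiently large $j$. The main obstacle here is modest: the only substantive geometric point is the selection of the closest boundary point $\zeta^{\ast}$ to $z$, which converts the Euclidean bound $\|z-\zeta^{\ast}\|$ from Theorem \ref{Main}(b) into the desired $\text{\rm dist}(z,\partial G_t)$ bound. The rest is a direct M\"obius computation, and the crucial uniformity in $t$ and $\zeta$ is inherited entirely from Theorem \ref{Main}.
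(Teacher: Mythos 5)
Your argument is correct and is essentially the proof the paper has in mind: Corollary \ref{Cor1} is the uniform analogue of Corollary 19.2.2 from [\ref{JP}], obtained exactly as you do by taking the peak function of Theorem \ref{Main} at the boundary point nearest to $z$, using (a)--(c), and composing with a M\"obius shift (equivalently, estimating $\boldsymbol{p}(h_t(z;\zeta^{\ast}),h_t(w;\zeta^{\ast}))$), with the uniformity in $t$ and $\zeta$ inherited from Theorem \ref{Main}. The only cosmetic discrepancy is in the ``in particular'' part, where the statement asks for every $j$ rather than all sufficiently large $j$; the finitely many remaining indices are absorbed by applying the first part to the single domains $G_j$ and then shrinking $\rho_2$ and enlarging $\rho_1<\varepsilon$ and $C_1$.
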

To get the upper estimate, we turn to the Kobayashi pseudodistance case. 
\begin{prop}
Let $(G_t)_{t\in T}$ be a family of bounded domains with $\mc{C}^2$-smooth boundaries, where $T$ is a compact metric space with associated metric $d$. Suppose we have a domain $U\s\s\mb{C}^n$ such that $\displaystyle{\bigcup_{t\in T}\partial G_t\s\s U}$ and with the property that for any $\varepsilon>0$ there exists a $\delta>0$ such that for any $s,t\in T$ with $d(s,t)\leq \delta$ there is $\|r_t-r_s\|_{\mc{C}^2(U)}<\varepsilon,$  where $r_t$ denotes a defining function for $G_t$, defined on $U$ for any $t\in T.$ Let $K\s\mb{C}^n$ be a compact set such that $K\s G_t$ for any $t\in T.$ Then there exists a constant $C>0$ such that for any $t\in T$ we have
$$
\boldsymbol{k}_{G_t}(z,w)\leq-\frac{1}{2}\log\text{\rm dist}(w,\partial G_t)+C,
$$
whenever $z\in K$ and $w\in G_t.$\\
In particular, if $(G_j)_{j\in\mb{N}}$ is a sequence of bounded domains with $\mc{C}^2$-smooth boundaries such that $G_j$ converges to $G$, a bounded domain with $\mc{C}^2$-smooth boundary with respect to $\mc{C}^2$ topology on domains, then for every compact $K\s G$ there exist a constant $C_1>0$ such that for sufficiently large $j$
$$
\boldsymbol{k}_{G_j}(z,w)\leq-\frac{1}{2}\log\text{\rm dist}(w,\partial G_j)+C_1,
$$
whenever $z\in K$ and $w\in G_j.$\label{Prop0}
\end{prop}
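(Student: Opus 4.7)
The plan is to exploit the uniform interior ball condition coming from $\mc{C}^2$-regularity together with the distance-decreasing property of the Kobayashi pseudodistance; strict pseudoconvexity plays no role here. First I would use that the $r_t$ have uniformly bounded $\mc{C}^2$-norms on $U$ and that $\|\nabla r_t\|$ is uniformly bounded away from zero on $\partial G_t$ (both via compactness of $T$ and hypothesis (iii)) to produce a radius $\rho_0>0$ such that for every $t\in T$ and every $\zeta\in\partial G_t$ there is $a(t,\zeta)\in G_t$ with $\mb{B}(a(t,\zeta),\rho_0)\s G_t$ and $\overline{\mb{B}(a(t,\zeta),\rho_0)}\cap\partial G_t=\{\zeta\}$.

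For $w\in G_t$ with $\text{dist}(w,\partial G_t)\leq\rho_0/2$ the nearest boundary point $\zeta(w)$ is unique, $w$ lies on the inward normal at $\zeta(w)$ inside the ball of radius $\rho_0$ centered at $a:=a(t,\zeta(w))$, and $\|w-a\|=\rho_0-\text{dist}(w,\partial G_t)$. Monotonicity of $\boldsymbol{k}$ under inclusion and the explicit formula on a ball then give
\begin{equation*}
\boldsymbol{k}_{G_t}(a,w)\leq\boldsymbol{k}_{\mb{B}(a,\rho_0)}(a,w)=\tfrac{1}{2}\log\tfrac{\rho_0+\|w-a\|}{\rho_0-\|w-a\|}\leq -\tfrac{1}{2}\log\text{dist}(w,\partial G_t)+\tfrac{1}{2}\log(2\rho_0).
\end{equation*}
By the triangle inequality, the conclusion reduces to a uniform bound $\boldsymbol{k}_{G_t}(z,a(t,\zeta))\leq C_1$ for $z\in K$, $t\in T$, $\zeta\in\partial G_t$, and this is where I expect the main difficulty.

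To obtain it I would run a finite-cover argument on the compact $T$. Fixing $t_0\in T$ and $\varepsilon\in(0,\rho_0/4)$ small enough that $W(t_0):=\{q\in G_{t_0}:\text{dist}(q,\partial G_{t_0})>\varepsilon\}$ is a domain containing $K$, hypothesis (iii) forces $W(t_0)\s G_t$ for all $t$ in some $T$-neighborhood $V(t_0)$, while every center $a(t,\zeta)$ with $t\in V(t_0)$ keeps distance at least $\rho_0/2$ from $\partial G_{t_0}$ and so lies in $W(t_0)$. Monotonicity gives $\boldsymbol{k}_{G_t}\leq\boldsymbol{k}_{W(t_0)}$ on the relevant relatively compact subset of $W(t_0)$, where $\boldsymbol{k}_{W(t_0)}$ is bounded by continuity; a finite subcover of $T$ then delivers $C_1$. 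The regime $\text{dist}(w,\partial G_t)>\rho_0/2$ is handled by the same cover argument applied to the pair $(z,w)$ directly, absorbing the bounded term $-\tfrac{1}{2}\log\text{dist}(w,\partial G_t)\geq-\tfrac{1}{2}\log(\text{diam}\,U)$ into the final constant. The ``in particular'' assertion is the special case $T:=\{1/j:j\in\mb{N}\}\cup\{0\}$ with $G_0:=G$, since $\mc{C}^2$-convergence of domains is precisely (iii).
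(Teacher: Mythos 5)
Your proposal is correct and follows essentially the same route as the paper, which itself runs the Jarnicki--Pflug Proposition 19.2.4 argument with uniformity in $t$: a uniform interior tangent ball of radius $\rho_0$ (the paper's $\varepsilon_0$), the explicit ball estimate $\boldsymbol{k}_{\mb{B}(a,\rho_0)}(a,w)\leq-\tfrac{1}{2}\log\mathrm{dist}(w,\partial G_t)+\tfrac{1}{2}\log(2\rho_0)$, and a compactness/finite-cover argument over $T$ to bound $\boldsymbol{k}_{G_t}(z,a)$ via monotonicity into an auxiliary domain contained in all nearby $G_s$. Your sublevel set $W(t_0)$ plays exactly the role of the paper's intermediate domain $G_{t,\delta}\supset\supset K\cup K_t$, so the difference is only cosmetic.
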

\begin{rem}
In comparison with Situation \ref{Situation}, here we do not assume strict pseudoconvexity of the domains. Only the boundary regularity is important here.
\end{rem} 
\begin{proof}
The proof goes along the lines of the proof of Proposition 19.2.4 from [\ref{JP}]. We only contain here the necessary modifications.\\
Observe that $\varepsilon_0$ as in the proof of the mentioned result may be taken to be independent of $t\in T$.\\
For fixed $t\in T$ let $\delta>0$ be such that for any $s\in T$ with $d(s,t)\leq \delta$ the set
$$\displaystyle{
K_t:=\overline{\bigcup_{s\in T:d(s,t)\leq \delta}\{z\in G_s:\text{dist}(z,\partial G_s)\geq\varepsilon_0\}}
}$$
is compact in $G_s$ and, moreover, $\text{dist}(K_t,\partial G_s)\geq\frac{\varepsilon_0}{2}.$ Let $G_{t,\delta}$ be a bounded domain with $\mc{C}^2$-smooth boundary such that
$$\displaystyle{
K\cup K_t\s\s G_{t,\delta}\s\bigcap_{s\in T:d(s,t)\leq \delta}G_s.
}$$
For $s$ as above and $z\in K, w\in G_s$, with $\text{dist}(w,\partial G_s)\leq\varepsilon_0$ using the same argument as in [\ref{JP}], we get the estimate
$$
\boldsymbol{k}_{G_s}(z,w)\leq-\frac{1}{2}\log\text{dist}(w,\partial G_s)+\frac{1}{2}\log(2\varepsilon_0)+C_{t,\delta}
$$
with $C_{t,\delta}:=\sup\{\boldsymbol{k}_{G_{t,\delta}}(a,b):a,b\in K\cup K_t\}$. By the compactness of $T$, the latter constant may be chosen independently of $t$. We end the proof as in [\ref{JP}].
\end{proof}
The next result gives uniform lower estimate for the Kobayashi pseudodistance as the arguments approach two different boundary points of the domains.
\begin{prop}
Let $(G_t)_{t\in T}$ be a family of strictly pseudoconvex domains as in Situation \ref{Situation}. Let $\varepsilon>0$. Then there exists a constant $C>0$ such that for every $t\in T$ and every $\zeta,\xi,$ different points from $\partial G_t$ such that $\|\zeta-\xi\|\geq\varepsilon$, we have
$$
\boldsymbol{k}_{G_t}(z,w)\geq-\frac{1}{2}\log\text{\rm dist}(z,\partial G_t)-\frac{1}{2}\log\text{\rm dist}(w, \partial G_t)-C
$$
whenever $z,w\in G_t$ are such that $z$ is close to $\zeta$ and $w$ is close to $\xi$ (with uniform size of the respective neighborhoods).\\
In particular, if $(G_j)_{j\in\mb{N}}$ is a sequence of strictly pseudoconvex domains with $\mc{C}^2$-smooth boundaries such that $G_j$ converges to $G$, a strictly pseudoconvex domain with $\mc{C}^2$-smooth boundary with respect to $\mc{C}^2$ topology on domains, and if $\varepsilon>0$ is given, then there exists a constant $C>0$ such that for every $j\in\mb{N}$ and every $\zeta,\xi,$ different points from $\partial G_j$ or from $\partial G$ such that $\|\zeta-\xi\|\geq\varepsilon$, we have
$$
\boldsymbol{k}_{G_j}(z,w)\geq-\frac{1}{2}\log\text{\rm dist}(z,\partial G_j)-\frac{1}{2}\log\text{\rm dist}(w, \partial G_j)-C
$$
whenever $z,w\in G_j$ are such that $z$ is close to $\zeta$ and $w$ is close to $\xi$ (with uniform size of the respective neighborhoods). \label{Prop1}
\end{prop}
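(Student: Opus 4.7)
The proof is an adaptation of the argument for Corollary 2.4 of [\ref{FR}], with the parameter family of peak functions produced by Theorem \ref{Main} replacing the single peak function used there. Because the quantitative data $\eta_1,\eta_2,d_1,d_2$ of Theorem \ref{Main} and the geometric data (defining functions, Levi form positivity) of Situation \ref{Situation} are already uniform in $t\in T$ (by assumption (iii) combined with compactness of $T$), the constants produced by the original [\ref{FR}] argument automatically become independent of $t$ and of the pair $(\zeta,\xi)$, subject only to $\|\zeta-\xi\|\geq\varepsilon$.

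\textbf{Main steps.} First, for the given $\varepsilon>0$, shrink $\eta_1$ from Theorem \ref{Main} so that $\|\zeta-\xi\|\geq\varepsilon$ forces $\xi\notin\mb{B}(\zeta,\eta_1)$; property (c) then yields $|h_t(w;\zeta)|\leq d_2<1$ for $w$ in a uniform neighborhood of $\xi$, and symmetrically $|h_t(z;\xi)|\leq d_2$ for $z$ near $\zeta$. Second, for $z$ close to $\zeta$ and $w$ close to $\xi$, apply Kobayashi contractivity to the holomorphic map $h_t(\cdot;\zeta):G_t\to\mb{D}$ to obtain
\[
\boldsymbol{k}_{G_t}(z,w)\geq \boldsymbol{p}\bigl(h_t(z;\zeta),h_t(w;\zeta)\bigr),
\]
and combine this with the strict-pseudoconvexity-based comparison of $|h_t(z;\zeta)|$ with the defining function of $G_t$ as carried out in [\ref{FR}] (with constants uniform in $t$ thanks to Theorem \ref{Main}) to derive the one-sided bound $\boldsymbol{k}_{G_t}(z,w)\geq -\tfrac{1}{2}\log\text{\rm dist}(z,\partial G_t)-C_1$; the analogous application with $h_t(\cdot;\xi)$ gives the symmetric one-sided bound in $w$. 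Third, upgrade the two one-sided bounds to the sum by analyzing an almost-minimizing Kobayashi chain joining $z$ to $w$: such a chain necessarily passes through a region uniformly bounded away from both $\zeta$ and $\xi$, and summing the contributions of its three parts (near $\zeta$, in the middle, near $\xi$) yields the full double-logarithmic lower bound. The particular case of a sequence $(G_j)$ converging to $G$ in $\mc{C}^2$-topology fits into the general framework by taking $T:=\mb{N}\cup\{\infty\}$ with the obvious metric.

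\textbf{Main obstacle.} The delicate point is the third step: extracting the sum (rather than merely the maximum) of the two one-sided bounds. Carrying out the chain-decomposition argument of [\ref{FR}] uniformly requires that the sizes of the boundary neighborhoods of $\zeta$ and $\xi$, the Hopf-type comparison constants between $1-|h_t(\cdot;\zeta)|^2$ and $\text{\rm dist}(\cdot,\partial G_t)$, and the radii determining the ``middle region'' of the Kobayashi chain be uniform in $t\in T$. All of these are delivered by Theorem \ref{Main} together with the compactness of $T$ and condition (iii) of Situation \ref{Situation}, which is the decisive input of the argument.
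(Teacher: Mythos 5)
Your proposal is correct and takes essentially the same route as the paper: the paper's proof likewise runs the chain-splitting argument of Proposition 19.2.7 in [\ref{JP}] (cf. Corollary 2.4 in [\ref{FR}]), with the uniform one-sided lower bound supplied by Corollary \ref{Cor1} in place of the non-uniform ingredient. Your first two steps merely re-derive that corollary from Theorem \ref{Main} (note that the peak function should be applied at the boundary point nearest to $z$, so that property (b) yields $1-|h_t(z;\cdot)|\leq d_1\,\text{dist}(z,\partial G_t)$ rather than a bound in terms of $\|z-\zeta\|$), and your third step is exactly the decomposition of an almost-optimal chain into the part before its first exit from a small ball around $\zeta$ and the part after its last entry into a disjoint small ball around $\xi$ that the cited proof performs.
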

\begin{proof}
The proof goes similarly to the proof of Proposition 19.2.7 from [\ref{JP}] (see also [\ref{FR}]). Only, one has to use our Corollary \ref{Cor1} instead of Theorem19.2.2 from [\ref{JP}].
\end{proof}
\begin{rem}
Observe that Proposition 9.1 from [\ref{MV}] can be deduced from Proposition \ref{Prop1}.
\end{rem}
If the arguments approach the same boundary point, we have the following uniform estimate:
\begin{prop}
Let $(G_t)_{t\in T}$ be a family of bounded domains with $\mc{C}^2$-smooth boundaries, where $T$ is a compact metric space with associated metric $d$. Suppose we have a domain $U\s\s\mb{C}^n$ such that $\displaystyle{\bigcup_{t\in T}\partial G_t\s\s U}$ and with the property that for any $\varepsilon>0$ there exists a $\delta>0$ such that for any $s,t\in T$ with $d(s,t)\leq \delta$ there is $\|r_t-r_s\|_{\mc{C}^2(U)}<\varepsilon,$  where $r_t$ denotes a defining function for $G_t$, defined on $U$ for any $t\in T.$ Then there exists a constant $C>0$ such that for any $t\in T$ and any $\zeta\in G_t$ there exists a neighborhood $V=V(\zeta)$, of uniform size, with the property that
\begin{multline*}
\boldsymbol{k}_{G_t}(z,w)\leq-\frac{1}{2}(\log\text{\rm dist}(z,\partial G_t)+\log\text{\rm dist}(w,\partial G_t))\\+\frac{1}{2}(\log(\text{\rm dist}(z,\partial G_t)+\|z-w\|)+\log(\text{\rm dist}(w,\partial G_t)+\|z-w\|))+C,
\end{multline*}
whenever $z,w\in G_t\cap V.$\\\label{Prop2}
In particular, if $(G_j)_{j\in\mb{N}}$ is a sequence of bounded domains with $\mc{C}^2$-smooth boundaries such that $G_j$ converges to $G$, a bounded domain with $\mc{C}^2$-smooth boundary with respect to $\mc{C}^2$ topology on domains, then there exists a constant $C>0$ such that for any $j\in\mb{N}$ and any $\zeta\in G_j$ there exists a neighborhood $V=V(\zeta)$, of uniform size, with the property that
\begin{multline*}
\boldsymbol{k}_{G_j}(z,w)\leq-\frac{1}{2}(\log\text{\rm dist}(z,\partial G_j)+\log\text{\rm dist}(w,\partial G_j))\\+\frac{1}{2}(\log(\text{\rm dist}(z,\partial G_j)+\|z-w\|)+\log(\text{\rm dist}(w,\partial G_j)+\|z-w\|))+C,
\end{multline*}
whenever $z,w\in G_j\cap V.$
\end{prop}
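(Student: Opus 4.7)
The plan is to mimic the single-domain argument of [\ref{FR}, Proposition 2.5] (see also the related upper-bound discussion in Chapter 19 of [\ref{JP}]) and promote it to the uniform setting by invoking hypothesis (iii) together with the compactness of $T$. As the remark after the statement points out, strict pseudoconvexity plays no role here; only $\mathcal{C}^2$-smoothness of the boundaries is used. Recall how the single-domain proof proceeds: one fixes a boundary point $\zeta$, chooses a holomorphic affine change of coordinates centered at $\zeta$ mapping the outer real normal to the positive $x_1$-axis, and then uses the $\mathcal{C}^2$-Taylor expansion of the defining function to obtain an inclusion of the form
$$
G\cap V \supset \{z\in V : \operatorname{Re} z_1 + M\|z\|^2 < 0\}
$$
on some neighborhood $V$ of $\zeta$. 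One then connects the two points $z,w$ by a short chain of explicit analytic discs with values in this paraboloid; the sum of Poincar\'e distances between their preimages in $\mathbb{D}$ produces the claimed hyperbolic bound.

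To promote this to the uniform statement, I would control three quantities independently of $(t,\zeta)$: the radius of $V$, the constant $M$ in the paraboloid inclusion, and the normalizing linear change of coordinates. By (iii) together with the compactness of $T$, the family $\{r_t\}_{t\in T}$ is uniformly bounded in $\mathcal{C}^2(U)$; the compactness of $\bigcup_{t\in T}\partial G_t\Subset U$, together with the joint continuity of $(t,z)\mapsto \nabla r_t(z)$ inherent in (iii), yields a uniform positive lower bound for $|\nabla r_t(\zeta)|$ over the set $\{(t,\zeta): t\in T,\,\zeta\in\partial G_t\}$. These two uniform bounds in turn deliver a uniform $M$, a uniform radius for $V(\zeta)$, and a jointly continuous choice of the linear normalization. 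The remainder of the disc construction in [\ref{FR}] is purely formulaic and automatically inherits the uniformity of its inputs, so the constant $C$ emerging at the end may be taken independent of $t\in T$ and $\zeta$. The particular case of a convergent sequence of domains follows by applying the first part to $T:=\{1/j:j\in\mathbb{N}\}\cup\{0\}$, noting that $\mathcal{C}^2$-convergence of the $G_j$ to $G$ supplies precisely hypothesis (iii) for this $T$.

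The main obstacle, as in Proposition \ref{Prop0}, is the passage from a single reference parameter to the whole family: one must check that the lower bound for $|\nabla r_t|$ along $\partial G_t$ and the size of the local straightening neighborhood can in fact be chosen uniformly, rather than degenerating as $t$ varies. Once a reference $t_0\in T$ is fixed, both facts are standard on $\partial G_{t_0}$; hypothesis (iii) then propagates them, via a $\mathcal{C}^1$-convergence argument, to all $s\in T$ with $d(s,t_0)$ sufficiently small, and a finite covering of $T$ completes the passage. Everything else is bookkeeping that mirrors the original single-domain proof.
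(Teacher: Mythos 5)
Your proposal is correct and follows essentially the same route as the paper: the paper likewise defers the analytic core to the single-domain upper estimate (Proposition 19.2.9 in [\ref{JP}], going back to [\ref{FR}]) and only argues that the localization radius and the final constant can be chosen independently of $t$ and $\zeta$, exactly the uniformity you extract from hypothesis (iii), the compactness of $T$, and the uniform lower bound on $\|\nabla r_t\|$ along $\bigcup_t\partial G_t$. The only cosmetic difference is that you phrase the local model via the paraboloid inclusion from the $\mathcal{C}^2$ Taylor expansion, whereas the paper's reference argument uses tangent balls of uniform radius with the explicit constant $\log 2+\boldsymbol{k}_{\mb{B}(0,\frac{3}{5})\cup\mb{B}(1,\frac{3}{5})}(0,1)$; both yield the same uniform bound.
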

\begin{proof}
The proof follows the lines of the proof of Proposition 19.2.9 from [\ref{JP}] (see also [\ref{FR}]) with necessary modifications. Observe that $R$ therein can be taken to be independent of $t\in T$ and $\zeta\in G_t$ (see also the proof of Proposition 9.2 from [\ref{MV}]). Also, the final constant $C$, given explicitly by $\log 2+\boldsymbol{k}_{\mb{B}(0,\frac{3}{5})\cup\mb{B}(1,\frac{3}{5})}(0,1)$ depends neither on $t\in T$ nor on $\zeta\in\partial G_t.$
\end{proof}
\begin{rem}
Observe that Proposition 9.2 from [\ref{MV}] can be deduced from Proposition \ref{Prop2}.
\end{rem}
With the aid of Theorems \ref{Main} and \ref{Main2} we can also give some uniform localization result for the Carath\'eodory-Reiffen pseudometric defined for a domain $G\s\mb{C}^n$ as
$$
\displaystyle{\boldsymbol{\gamma}_G(z;X):=\sup\left\{\big|\sum_{j=1}^n\frac{\partial f}{\partial z_j}(z)X_j\big|:f\in\mc{O}(G,\mb{D}),f(z)=0\right\} 
}
$$
for $z\in G,X=(X_1,\ldots,X_n)\in\mb{C}^n.$
\begin{prop}\label{Reiffen}
Let $(G_t)_{t\in T}$ be a family of strictly pseudoconvex domains as in Situation \ref{Situation}. Then there exists an $R>0$ such that for any $t\in T$ and any $\zeta\in\partial G_t$ the set $G_t\cap\mb{B}(\zeta, R)$ is connected and for any $X\in\mb{C}^n\setminus\{0\}$ we have
$$
\displaystyle{\lim_{G_t\cap\mb{B}(\zeta, R)\ni z\to\zeta}\frac{\boldsymbol{\gamma}_{G_t\cap\mb{B}(\zeta, R)}(z;X)}{\boldsymbol{\gamma}_{G_t}(z;X)}=1}.
$$
The convergence is uniform in $\zeta\in\partial G_t$ and $X\in\mb{C}^n\setminus\{0\}$.
\end{prop}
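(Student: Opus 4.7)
The plan is to carry out Graham's localisation argument for $\bs{\gamma}$ (cf. Proposition 6 of [\ref{Gra}] and Theorem 19.3.1 of [\ref{JP}]) in the parameter setting, using Theorems \ref{Main} and \ref{Main2} as black boxes to keep every constant uniform in $t\in T$ and $\zeta\in\partial G_t$. First I would fix $R>0$ small enough for Theorem \ref{Main2} to apply (so in particular $G_t\cap\mb{B}(\zeta,R)$ is connected for every $t$, $\zeta$), read off the associated $\rho<R$ from that theorem, and then \emph{re-invoke} Theorem \ref{Main} with its parameter set to $\eta_1:=\rho$. This produces peak functions $h_t(\cdot;\zeta)$ on neighbourhoods $\widehat{G_t}\supset\overline{G_t}$ whose constants $\eta_2,d_1,d_2$ are uniform in $(t,\zeta)$ and, crucially, satisfy $|h_t(z;\zeta)|\leq d_2<1$ for every $z\in\overline{G_t}$ with $\|z-\zeta\|\geq\rho$.

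The inclusion $G_t\cap\mb{B}(\zeta,R)\s G_t$ gives $\bs{\gamma}_{G_t\cap\mb{B}(\zeta,R)}(z;X)\geq\bs{\gamma}_{G_t}(z;X)$ by holomorphic contractivity, so only a reverse inequality up to a factor $1+\varepsilon$ has to be produced. Fixing $\varepsilon>0$ and an auxiliary $\varepsilon_1>0$, I would take an $f\in\mc{O}(G_t\cap\mb{B}(\zeta,R),\mb{D})$ with $f(z)=0$ almost realising $\bs{\gamma}_{G_t\cap\mb{B}(\zeta,R)}(z;X)$, and apply Theorem \ref{Main2} with $m=1$, $w_1^t:=z$ and tolerance $\varepsilon_1$. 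This yields $\hat{f}\in\mc{H}^{\infty}(G_t)$ with $\hat{f}(z)=0$, $D\hat{f}(z)=Df(z)$, $\|\hat{f}\|_{G_t}\leq L$ (the summand $N$ vanishes since $m=1$), and $\|\hat{f}-f\|_{G_t\cap\mb{B}(\zeta,\rho)}<\varepsilon_1$, with $L=L(1,\varepsilon_1,R)$ independent of $(t,\zeta,z)$. I then set $F_k:=\hat{f}\cdot h_t(\cdot;\zeta)^k$ on $G_t$. Because $\hat{f}(z)=0$, a direct computation gives
$$
F_k(z)=0,\qquad \sum_{j=1}^n\frac{\partial F_k}{\partial z_j}(z)X_j=h_t(z;\zeta)^k\sum_{j=1}^n\frac{\partial f}{\partial z_j}(z)X_j,
$$
while on $G_t\cap\mb{B}(\zeta,\rho)$ one has $|F_k|\leq(|f|+\varepsilon_1)|h_t|^k\leq 1+\varepsilon_1$, and on $G_t\setminus\mb{B}(\zeta,\rho)$ the sharpened (c) gives $|F_k|\leq L d_2^k\leq\varepsilon_1$ once $k\geq k_0=k_0(L,d_2,\varepsilon_1)$, a threshold independent of $(t,\zeta)$.

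Rescaling $F_{k_0}$ by $1+\varepsilon_1$ produces an admissible competitor for $\bs{\gamma}_{G_t}(z;X)$, so
$$
\bs{\gamma}_{G_t}(z;X)\geq\frac{|h_t(z;\zeta)|^{k_0}}{1+\varepsilon_1}\Big|\sum_{j=1}^n\frac{\partial f}{\partial z_j}(z)X_j\Big|,
$$
and taking the supremum over admissible $f$ yields
$$
\frac{\bs{\gamma}_{G_t\cap\mb{B}(\zeta,R)}(z;X)}{\bs{\gamma}_{G_t}(z;X)}\leq\frac{1+\varepsilon_1}{|h_t(z;\zeta)|^{k_0}}.
$$
Estimate (b) of Theorem \ref{Main} gives $|h_t(z;\zeta)|\geq 1-d_1\|z-\zeta\|$ on $\widehat{G_t}\cap\mb{B}(\zeta,\eta_2)$, so $|h_t(z;\zeta)|^{k_0}\to 1$ as $z\to\zeta$ at a rate depending only on $d_1$ and $k_0$. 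Choosing $\varepsilon_1$ small first and then restricting $z$ to a ball around $\zeta$ of radius independent of $t$ and $\zeta$ makes the right-hand side $\leq 1+\varepsilon$. Uniformity in $X\in\mb{C}^n\setminus\{0\}$ is built in: the right-hand side above has no $X$-dependence at all, so the estimate is the same for every $X$.

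The main obstacle I anticipate is the scheduling of parameters: the constants $\eta_2, d_1, d_2$ in Theorem \ref{Main} depend on the choice of $\eta_1$ made there, and for the damping step $L d_2^k\leq\varepsilon_1$ to annihilate $\hat{f}$ on $G_t\setminus\mb{B}(\zeta,\rho)$ one needs peak functions selected with $\eta_1\leq\rho$. Hence $R$ and $\rho$ must be fixed from Theorem \ref{Main2} first, and only afterwards is Theorem \ref{Main} re-invoked with the correctly tuned $\eta_1$. Once this bookkeeping is in place, the rest is a uniform version of the classical argument, and every uniformity claim descends directly from the corresponding uniformity in Theorems \ref{Main} and \ref{Main2}.
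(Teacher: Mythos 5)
Your proposal is correct and follows essentially the same route as the paper: the paper's proof simply runs the classical localisation argument of Graham (Proposition 6) and Jarnicki--Pflug (Theorem 19.3.1), observing that Theorems \ref{Main} and \ref{Main2} make all constants ($R,\rho,L,\eta_1,\eta_2,d_1,d_2$, and the radius $\eta(\varepsilon)$) uniform in $t$ and $\zeta$, which is exactly the peak-function damping scheme you spell out. Your explicit bookkeeping (fixing $R,\rho$ from Theorem \ref{Main2} first and then re-invoking Theorem \ref{Main} with $\eta_1=\rho$) is a faithful, correctly ordered expansion of what the paper leaves implicit.
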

\begin{proof}
The proof is similar to the proof of Theorem 19.3.1 from [\ref{JP}] (see also [\ref{Gra}]). Note that thanks to Theorems \ref{Main} and \ref{Main2}, all the constants $R,\rho, L, \eta_2, \eta_1,d_2,d_1,$ and $\eta(\varepsilon)$ may be chosen independently of $t$ and $\zeta\in\partial G_t.$
\end{proof}

\end{document}